
\documentclass[]{interact}

\usepackage{epstopdf}
\usepackage[caption=false]{subfig}

\usepackage[numbers,sort&compress]{natbib}
\bibpunct[, ]{[}{]}{,}{n}{,}{,}

\usepackage{xcolor}
\usepackage{bbm}
\DeclareMathOperator{\sign}{sign}

\theoremstyle{plain}
\newtheorem{theorem}{Theorem}[section]

\theoremstyle{definition}
\newtheorem{definition}[theorem]{Definition}

\theoremstyle{remark}
\newtheorem{remark}{Remark}

\begin{document}


\title{Standard and fractional reflected Ornstein-Uhlenbeck processes as the limits of square roots of Cox-Ingersoll-Ross processes}

\author{
    \name{Yuliya Mishura\textsuperscript{a} and Anton Yurchenko-Tytarenko\textsuperscript{b}}\thanks{CONTACT A. Yurchenko-Tytarenko. Email: antony@math.uio.no}
    \affil{\textsuperscript{a}Department of Probability, Statistics and Actuarial Mathematics, Taras Shevchenko National University of Kyiv, 64/13, Volodymyrs'ka St., Kyiv 01601, Ukraine;\\
    \textsuperscript{b}Department of Mathematics, University of Oslo, Moltke Moes vei 35, 0851 Oslo, Norway}
}

\maketitle

\begin{abstract}
    In this paper, we establish a new connection between Cox-Ingersoll-Ross (CIR) and reflected Ornstein-Uhlenbeck (ROU) models driven by either a standard Wiener process or a fractional Brownian motion with $H>\frac{1}{2}$. We prove that,  with probability 1, the square root of the CIR process converges uniformly on compacts to the ROU process as the mean reversion parameter tends to either $\sigma^2/4$ (in the standard case) or to $0$ (in the fractional case). This also allows to obtain a new representation of the reflection function of the ROU as the limit of integral functionals of the CIR processes. The results of the paper are illustrated by simulations.     
\end{abstract}

\begin{keywords}
    Cox-Ingersoll-Ross process, reflected Ornstein-Uhlenbeck process, fractional Brownian motion.\\
    \textbf{MSC 2020:}  60H10; 60G22; 91G30
\end{keywords}

\section{Introduction}
    Both the reflected Ornstein-Uhlenbeck (ROU) and the Cox-Ingersoll-Ross (CIR) processes are extremely popular models in a variety of fields. Without attempting to give a complete overview of possible applications due to the large amount of literature on the topic, we only mention that the ROU process is widely used in queueing theory \cite{GNR1986, WG2003-1, WG2003-2, WG2005}, in population dynamics modeling \cite{AG2004, Ricciardi1986}, in economics and finance for modeling regulated markets \cite{Krugman1991, YLRWB2012, BTWY2011, BWY2011}, interest rates \cite{GK1997} and stochastic volatility \cite{SZh1999} (see also \cite{GNdiC2012, Linetsky2005} and references therein for a more details on applications of the ROU in various fields) while the most notable usages of the CIR process are related to representing the dynamics of interest rates \cite{CIR1981, CIR1985-1, CIR1985-2} and stochastic volatility in the Heston model \cite{Heston1993}. 
    
    It is well-known \cite{Maghsoodi1996, ShW1973} that the CIR process has strong links with the standard OU dynamics; in particular, if $B = (B_1,...,B_d)$ is a $d$-dimensional Brownian motion and $U = (U_1,...,U_d)$ is a standard $d$-dimensional OU process given by
    \[
        U_i(t) = U_i(0) - \frac{b}{2}\int_0^t U_i(s)ds + \frac{\sigma}{2} B_i(t), \quad t\ge 0,\quad i=1,...,d,
    \]
    then it is easy to see via It\^o's formula that the process ${\sum_{i=1}^d U^2_i(t)}$, $t\ge 0$, is the CIR process of the form
    \begin{equation}\label{eq: CIR Introduction}
        X(t) = X(0) + \int_0^t \left(a - bX(s)\right)ds + \sigma \int_0^t \sqrt{X(s)} dW(s), \quad t\ge 0,
    \end{equation}
    with $a = \frac{d\sigma^2}{4}$ and  $W(t) := \sum_{i=1}^d \int_0^t \frac{U_i(s)}{\sqrt{\sum_{j=1}^d U^2_j(s)}}dB_i(s)$ (which is a standard Brownian motion by Levy's characterization). The value $d=\frac{4a}{\sigma^2}$ is sometimes referred to as a \emph{dimension} or a \emph{number of degrees of freedom} of the CIR process (see e.g. \cite{Maghsoodi1996} and references therein) and thus, in this terminology, a square of a standard one-dimensional OU process turns out to be a CIR process with one degree of freedom w.r.t. another Brownian motion. 
    
    In this paper, we investigate a connection between the CIR and the ROU processes that is in some sense related to the one described above. Namely, in the first part we prove that the ROU process
    \begin{equation}\label{eq: ROU Introduction}
        Y(t) = Y(0) - \frac{b}{2} \int_0^t Y(s) ds + \frac{\sigma}{2} W(t) + L(t),\quad t\ge 0,
    \end{equation}
    where $W$ is a standard Brownian motion and $L$ is a continuous non-decreasing process that can have points of growth only at zeros of $Y$, coincides with the square root of the CIR process of the type \eqref{eq: CIR Introduction} with $a=\frac{\sigma^2}{4}$ (i.e. with one degree of freedom) driven by the same Brownian motion $W$. Moreover, if $\{\varepsilon_n,~n\ge 1\}$ is a sequence of positive numbers such that $\varepsilon_n \downarrow 0$ as $n\to\infty$, then, with probability 1, for all $T>0$
    \begin{equation}\label{eq: representation introduction}
        \sup_{t\in [0,T]}\left| L(t) - \frac{1}{2} \int_0^t \frac{\varepsilon_n}{\sqrt{X_{\varepsilon_n}(s)}}ds\right| \to 0, \quad n\to\infty,
    \end{equation}
    where $X_{\varepsilon_n}$ is the CIR process of the form
    \[
        X_{\varepsilon_n}(t) = X(0) + \int_0^t \left(\frac{\sigma^2}{4} + \varepsilon_n - bX_{\varepsilon_n}(s)\right)ds + \sigma \int_0^t \sqrt{X_{\varepsilon_n}(s)} dW(s).
    \]
    
    The second part of the paper discusses the connection between fractional counterparts of equations \eqref{eq: CIR Introduction} and \eqref{eq: ROU Introduction} driven by fractional Brownian motion $\{B^H(t),~t\ge 0\}$ with Hurst index $H>\frac{1}{2}$. Namely, we consider a fractional Cox-Ingersoll-Ross process
    \[
        X_\varepsilon^H(t) = X(0) + \int_0^t \left(\varepsilon - b X_\varepsilon^H(s)\right)ds + \sigma \int_0^t \sqrt{X_\varepsilon^H(s)} dB^H(s), \quad t\ge 0,
    \]
    where the integral $\int_0^t \sqrt{X^H(s)} dB^H(s)$ is understood as the pathwise limit of Riemann-Stieltjes integral sums (see \cite{MYuT2018} or \cite[Subsection 4.1]{DNMYT2020}) and prove that with probability 1 the paths of $\{\sqrt{X_\varepsilon^H(t)},~t\ge 0\}$ a.s. converge to the reflected fractional Ornstein-Uhlenbeck (RFOU) process uniformly on each compact $[0,T]$ as $\varepsilon \downarrow 0$. Moreover, an analogue of the representation \eqref{eq: representation introduction} also takes place: if $L^H$ is a reflection function of the RFOU process, then, with probability 1, for each $T>0$
    \[
        \sup_{t\in[0,T]}\left| L^H(t) - \frac{1}{2} \int_0^t \frac{\varepsilon}{\sqrt{X^H_\varepsilon (s)}}ds\right| \to 0, \quad \varepsilon \downarrow 0.
    \]
    
    The paper is organised as follows. In section \ref{sec: standard case}, we consider the link between the CIR and the ROU processes in the standard Wiener case. Section \ref{sec: fractional case} is devoted to the fractional setting. Section \ref{sec: simulations} contains simulations that illustrate our results.

\section{Classical reflected Ornstein-Uhlenbeck and Cox-Ingersoll-Ross processes}\label{sec: standard case}

The main goal of this section is to establish connection between Cox-Ingersoll-Ross (CIR) and reflected Ornstein-Uhlenbeck (ROU) processes in the standard Brownian setting. We shall start from the definition of a \emph{reflection function} following the one given in the classical work \cite{Skorokhod1961}.

\begin{definition}\label{def: reflection function}
    Let $\xi = \{\xi(t),~t\ge 0\}$ be some stochastic process. The process $\zeta = \{\zeta(t),~t\ge 0\}$ is called \emph{a reflection function} for $\xi$, if $\zeta$ is, with probability 1, a continuous non-decreasing process such that $\zeta(0)=0$ and the points of growth of $\zeta$ can occur only at zeros of $\xi$.
\end{definition}

\begin{definition}
    Stochastic process $\widetilde Y = \{\widetilde Y(t),~t\ge 0\}$ is called a \emph{reflected Ornshein-Uhlenbeck (ROU) process} if it satisfies a stochastic differential equation of the form
    \begin{equation}\label{eq: reflected OU process equation}
        \widetilde Y(t) = Y(0) - \widetilde{b} \int_0^t \widetilde Y(s) ds + \widetilde{\sigma} W(t) + \widetilde L(t),\quad t\ge 0,
    \end{equation}
    where $Y(0)$, $\widetilde{b}$ and $\widetilde{\sigma}$ are positive constants, $W = \{W(t),~t\ge 0\}$ is a standard Brownian motion, $\{\widetilde L(t),~t \ge 0\}$ is a reflection function for $\widetilde Y$ and $\widetilde Y \ge 0$ a.s.
\end{definition}

\begin{remark}\label{rem: uniqueness of reflection function}
    The ROU process is well-known and studied in the literature, see e.g. \cite{WG2003-2} and references therein. Note also that, despite \eqref{eq: reflected OU process equation} has two unknown functions $\widetilde Y$ and $\widetilde L$, the solution is still unique. Indeed, let $\widetilde Y$ and $\widehat Y$ be two stochastic processes satisfying
    \[
        \widetilde Y(t) = Y(0) - \widetilde{b} \int_0^t \widetilde Y(s) ds + \widetilde{\sigma} W(t) + \widetilde L(t)
    \]
    and
    \[
        \widehat{Y}(t) = Y(0) - \widetilde b \int_0^t \widehat {Y}(s) ds + \widetilde{\sigma} W(t) + \widehat L(t),
    \]
    where $\widetilde L$ and $\widehat L$ are the corresponding reflection functions. Assume that on some $\omega\in\Omega$ such that both $\widetilde Y$ and $\widehat Y$ are continuous
    \begin{equation}\label{proofeq: assume that the difference is positive}
        \widetilde{Y}(t) - \widehat Y(t) > 0
    \end{equation}
    and consider $\tau(t) := \sup\{s\in[0,t):~\widetilde{Y}(t) - \widehat Y(t) = 0\}$. Then $\widetilde{Y}(u) - \widehat Y(u) > 0$ for all $u\in(\tau(t), t]$; moreover, $\widetilde Y(u) > 0$ for $u\in(\tau(t), t]$, therefore $\widetilde L$ is non-increasing on $(\tau(t), t]$. It means that the difference $\widetilde{Y}(u) - \widehat Y(u)$ is also non-increasing on $(\tau(t), t]$ since
    \[
        \widetilde{Y}(u) - \widehat Y(u) = - \widetilde{b} \int_{\tau(t)}^u (\widetilde{Y}(s) - \widehat Y(s)) ds + \left(\widetilde L(u) - \widehat L(u)\right) - \left(\widetilde L(\tau(t)) - \widehat L(\tau(t))\right)
    \]
    and the right-hand side is non-increasing w.r.t. $u$. Whence, taking into account that $\widetilde{Y}({\tau(t)}) - \widehat Y({\tau(t)}) = 0$ due to the definition of $\tau(t)$ and continuity of both $\widetilde Y$ and $\widehat Y$, the difference $\widetilde{Y}(u) - \widehat Y(u)$ cannot be positive for any $u\in(\tau(t), t]$ which contradicts \eqref{proofeq: assume that the difference is positive}. Interchanging the roles of $\widetilde{Y}$ and $\widehat Y$, one can easily verify that $\widetilde{Y}(t) - \widehat Y(t)$ cannot be negative either and whence $\widehat Y = \widetilde Y$, $\widehat L = \widetilde L$.
\end{remark} 

Now, consider a standard CIR process defined as a continuous modification of the unique solution to the equation
\begin{equation}\label{0.q}
   X(t) = X(0) + \int_0^t \left(a - bX(s)\right)ds + \sigma \int_0^t \sqrt{X(s)} dW(s), \quad t\ge 0,
\end{equation}
where $X(0), a, b, \sigma > 0$ and $W = \{W(t),~t\ge 0\}$ is a classical Wiener process. It is well-known (see e.g. \cite[Example 8.2]{IW}) that for $a>0$ the solution $\{X(t),~t\ge 0\}$ is non-negative a.s. for any $t\ge 0$; moreover, the solution is strictly positive a.s. provided that $a\ge \frac{\sigma^2}{2}$, see e.g. \cite[Chapter 5]{KS}. Therefore, if $a>0$, the square-root process $Y = \{Y(t),~t\in[0,T]\} := \{\sqrt{X(t)},~t\in[0,T]\}$ is well-defined.

For an arbitrary $\varepsilon >0$, consider a the stochastic process $\{\sqrt{X(t) + \varepsilon},~t \in [0,T]\}$. By It\^o's formula, for any $t\ge 0$
\begin{equation}\label{eq: Ito}
\begin{aligned}
    \sqrt{X(t) + \varepsilon} &= \sqrt{X(0) + \varepsilon} + \frac{1}{2} \int_0^t \left( \frac{a}{\sqrt{X(s) + \varepsilon}} - \frac{\sigma^2}{4} \frac{X(s)}{(X(s) + \varepsilon)^{\frac{3}{2}}} \right)ds 
    \\
    &\quad - \frac{1}{2} \int_0^t \frac{b X(s)}{\sqrt{X(s) + \varepsilon}} ds + \frac{\sigma}{2} \int_0^t \frac{\sqrt{X(s)}}{\sqrt{X(s) + \varepsilon}} dW(s)
\end{aligned}
\end{equation}
and, since the left-hand side of \eqref{eq: Ito} converges to $\sqrt{X(t)} = Y(t)$ a.s. as $\varepsilon \to 0$, moving $\varepsilon \to 0$ in the right-hand side would give us the dynamics of $Y$. 

First, it is clear that for any $t\ge 0$
\begin{equation}\label{eq: convergence CIR 1}
\begin{gathered}
    \sqrt{X(0) + \varepsilon} \to Y(0) \quad a.s.  
\end{gathered}
\end{equation}
and
\begin{equation}\label{eq: convergence CIR 2}
\begin{gathered}
    \int_0^t \frac{X(s)}{\sqrt{X(s) + \varepsilon}} ds \to \int_0^t Y(s) ds \quad a.s. 
\end{gathered}
\end{equation}
as $\varepsilon \to 0$. Further, by the monotone convergence,
\begin{equation}\label{eq: place where integral of 1 over Y arises}
\begin{gathered}
    \int_0^t \frac{1}{\sqrt{X(s) + \varepsilon}} ds \to  \int_0^t \frac{1}{Y(s)} ds \in [0,\infty)\cup\{\infty\} \quad a.s.,
    \\
    \int_0^t \frac{X(s)}{(X(s) + \varepsilon)^{\frac{3}{2}}} \to \int_0^t \frac{1}{Y(s)} ds \in [0,\infty)\cup\{\infty\} \quad a.s.
\end{gathered}
\end{equation}
as $\varepsilon \to 0$. Finally, by Burkholder-Davis-Gundy inequality and dominated convergence theorem, for any $T>0$
\begin{equation*}
\begin{gathered}
    \mathbb E\left(\sup_{t\in [0,T]}\left|\int_0^t \frac{\sqrt{X(s)}}{\sqrt{X(s) + \varepsilon}} dW(s) - W(t)\right|\right)^{2} \le 4\mathbb E \int_0^T \left(\frac{\sqrt{X(s)}}{\sqrt{X(s) + \varepsilon}} -1\right)^{2} ds
    \\
    = 4\mathbb E\int_0^T \left(\frac{\sqrt{X(s)}}{\sqrt{X(s) + \varepsilon}} -1\right)^{2} \mathbbm{1}_{\{X(s) > 0 \}} ds + 4 \mathbb{E}\int_0^T  \mathbbm{1}_{\{X(s) = 0 \}} ds
    \\
    = 4\mathbb E\int_0^T \left(\frac{\sqrt{X(s)}}{\sqrt{X(s) + \varepsilon}} -1\right)^{2} \mathbbm{1}_{\{X(s) > 0 \}} ds \to 0, \quad \varepsilon \to 0,
\end{gathered}
\end{equation*}
where we used continuity of the distribution of $X(s)$ for each $s>0$ to state that $4 \mathbb{E}\int_0^T  \mathbbm{1}_{\{X(s) = 0 \}} ds = 0$ (see e.g. \cite{Maghsoodi1996} and references therein). This implies that
\begin{equation}\label{eq: L2 convergence of stochastic integral to W}
    \sup_{t\in [0,T]}\left|\int_0^t \frac{\sqrt{X(s)}}{\sqrt{X(s) + \varepsilon}} dW(s) - W(t)\right| \xrightarrow{L^2(\Omega)} 0, \qquad \varepsilon \to 0.
\end{equation}
By \eqref{eq: L2 convergence of stochastic integral to W}, it is evident that there exists a sequence $\{\varepsilon_{n},~n \ge 1\}$ which depends on $T$ such that
\begin{equation}\label{eq: a.s.}
\begin{gathered}
    \sup_{t\in[0,T]}\left|\int_0^t \frac{\sqrt{X(s)}}{\sqrt{X(s) + \varepsilon_{n}}} dW(s) - W(t)\right| \to 0 \quad a.s., \quad n\to\infty
\end{gathered}
\end{equation}
and along this sequence 
\begin{equation}\label{eq: the reflected piece a.s.}
    \lim_{n\to\infty} \frac{1}{2} \int_0^t \left( \frac{a}{\sqrt{X(s) + \varepsilon_n}} - \frac{\sigma^2}{4}\frac{X(s)}{(X(s) + \varepsilon_n)^{\frac{3}{2}}} \right) ds < \infty, \quad t\in[0,T],
\end{equation}
a.s. because all other limits in \eqref{eq: Ito} as $\varepsilon_n \to 0$ are finite a.s. However, the integral $\int_0^t \frac{1}{Y(s)}ds$ which arises in \eqref{eq: place where integral of 1 over Y arises} may be infinite and thus the explicit form of the limit above for now remains obscure. This issue as well as the connection of $Y$ to the ROU process is addressed in the next theorem.

\begin{theorem}\label{th: existence of integral in equation for Y}
    Let $Y=\{Y(t), t\ge 0\} = \{\sqrt{X(t)},~t \ge 0\}$ be the square root process, where $X$ is the CIR process defined by \eqref{0.q}. Denote
    \[
        \tau := \inf \{t\ge 0: X(t)=0\} = \inf \{t\ge 0: Y(t)=0\}.
    \]
    \begin{itemize}
        \item[(a)] If $a>\frac{\sigma^2}{4}$, then for any $t\ge 0$ 
        \[
            \int_0^t \frac{1}{Y(s)}ds < \infty \quad a.s.
        \]
        Moreover, the square root process $Y$ a.s. satisfies the SDE of the form
        \begin{equation}\label{equ:posit}
            Y(t) = Y(0) + \frac{1}{2} \left(a - \frac{\sigma^2}{4}\right) \int_0^t \frac{1}{Y(s)}ds - \frac{b}{2} \int_0^t Y(s) ds + \frac{\sigma}{2} W(t), 
        \end{equation}
        $Y(0) = \sqrt{X(0)}$, and the solution to this equation is unique among non-negative stochastic processes.

        \item[(b)] If $a= \frac{\sigma^2}{4}$, then
        \[
            \int_0^{\tau}\frac{1}{Y(s)}ds < \infty \quad a.s.
        \]
        while
        \[
            \int_0^{\tau+\gamma} \frac{ds}{Y(s)}ds = \infty \quad a.s.
        \]
        for any $\gamma > 0$. Moreover, the square root process $Y$ satisfies the SDE of the form
        \begin{equation}\label{equal_1}
            Y(t) = Y(0) - \frac{b}{2}\int_0^t Y(s) ds + \frac{\sigma}{2} W(t) +  L(t),
        \end{equation}
        where the process $L$ from \eqref{equal_1} is a continuous nondecreasing process the points of growth of which can occur only at zeros of $Y$, i.e. $Y$ is a reflected Ornstein-Uhlenbeck process.
    \end{itemize}
\end{theorem}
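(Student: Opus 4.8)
The plan is to derive both parts by letting $\varepsilon\to 0$ along the sequence $\{\varepsilon_n\}$ from \eqref{eq: a.s.} in the It\^o formula \eqref{eq: Ito}, reusing the limits \eqref{eq: convergence CIR 1}--\eqref{eq: L2 convergence of stochastic integral to W} already established. The only genuinely new ingredient is the elementary identity
\[
    \frac{a}{\sqrt{X(s)+\varepsilon}}-\frac{\sigma^2}{4}\,\frac{X(s)}{(X(s)+\varepsilon)^{3/2}}
    =\Big(a-\frac{\sigma^2}{4}\Big)\frac{X(s)}{(X(s)+\varepsilon)^{3/2}}+a\,\frac{\varepsilon}{(X(s)+\varepsilon)^{3/2}}\ \ge\ 0,
\]
together with the observations that $\frac{X(s)}{(X(s)+\varepsilon)^{3/2}}\uparrow\frac1{Y(s)}$ as $\varepsilon\downarrow 0$ for $s$ with $X(s)>0$, and $\frac{X(s)}{(X(s)+\varepsilon)^{3/2}}+\frac{\varepsilon}{(X(s)+\varepsilon)^{3/2}}=\frac1{\sqrt{X(s)+\varepsilon}}$.

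For part (a) I would first note that, via the identity above, the drift term in \eqref{eq: Ito} is bounded below by $\frac12(a-\frac{\sigma^2}{4})\int_0^t\frac{X(s)}{(X(s)+\varepsilon)^{3/2}}ds\ge 0$, while after rearranging \eqref{eq: Ito} this same term equals a quantity which, along $\{\varepsilon_n\}$, converges a.s.\ to a finite limit (this is precisely \eqref{eq: the reflected piece a.s.}, obtained from \eqref{eq: convergence CIR 1}, \eqref{eq: convergence CIR 2} and \eqref{eq: a.s.}). Monotone convergence applied to the lower bound then forces $\int_0^t\frac1{Y(s)}ds<\infty$ a.s.\ for every $t$. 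Once this is known, every term of \eqref{eq: Ito} converges a.s.\ along $\{\varepsilon_n\}$; in particular the drift term tends to $\frac12(a-\frac{\sigma^2}{4})\int_0^t\frac1{Y(s)}ds$, since $\int_0^t\frac{\varepsilon_n}{(X(s)+\varepsilon_n)^{3/2}}ds=\int_0^t\frac{ds}{\sqrt{X(s)+\varepsilon_n}}-\int_0^t\frac{X(s)}{(X(s)+\varepsilon_n)^{3/2}}ds\to 0$. This yields \eqref{equ:posit} for each fixed $t$, hence for all $t$ by continuity of both sides. Uniqueness among nonnegative processes I would prove exactly as in Remark \ref{rem: uniqueness of reflection function}: since $y\mapsto\frac12(a-\frac{\sigma^2}{4})\frac1y-\frac b2 y$ is strictly decreasing on $(0,\infty)$, if $Y_1,Y_2$ are two solutions with $Y_1(t_0)>Y_2(t_0)$ then on $(\tau_0,t_0]$ with $\tau_0:=\sup\{s<t_0:Y_1(s)=Y_2(s)\}$ one has $Y_1>Y_2\ge 0$, hence $Y_1>0$ and all relevant integrals are finite, so $Y_1-Y_2$ is strictly decreasing there, contradicting $Y_1(\tau_0)=Y_2(\tau_0)$ (and symmetrically for $Y_1(t_0)<Y_2(t_0)$).

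For part (b) the drift term in \eqref{eq: Ito} is now exactly $\frac{\sigma^2}{8}\int_0^t\frac{\varepsilon}{(X(s)+\varepsilon)^{3/2}}ds$, which is nonnegative and nondecreasing in $t$; rearranging \eqref{eq: Ito} and passing to the limit along $\{\varepsilon_n\}$ identifies its a.s.\ limit as $L(t):=Y(t)-Y(0)+\frac b2\int_0^t Y(s)ds-\frac\sigma2 W(t)$, which is \eqref{equal_1}. From this expression $L$ is continuous with $L(0)=0$, and being a pointwise limit of nondecreasing functions it is nondecreasing. Moreover, on any interval on which $Y>0$ the process $X$ is bounded away from $0$, so there $\frac{\sigma^2}{8}\int\frac{\varepsilon_n}{(X+\varepsilon_n)^{3/2}}=O(\varepsilon_n)\to 0$ and $L$ is constant; since $\{Y>0\}$ is a countable union of open intervals, $L$ grows only on $\{Y=0\}$. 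Together with $Y\ge 0$, \eqref{equal_1} therefore exhibits $Y$ as a ROU process with $\widetilde b=b/2$, $\widetilde\sigma=\sigma/2$.

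It remains to analyse $\int_0^{\cdot}\frac{ds}{Y(s)}$ in case (b), which I expect to be the main obstacle. On $[0,\tau)$ we have $L\equiv 0$, so there $Y$ solves the ordinary (nonsingular) Ornstein-Uhlenbeck equation with $\langle Y\rangle_s=\frac{\sigma^2}{4}s$; applying It\^o's formula on $[0,t]$, $t<\tau$, to $f(y)=y\log y-y$ gives
\[
    \frac{\sigma^2}{8}\int_0^t\frac{ds}{Y(s)}=f(Y(t))-f(Y(0))+\frac b2\int_0^t Y(s)\log Y(s)\,ds-\frac\sigma2\int_0^t\log Y(s)\,dW(s).
\]
Letting $t\uparrow\tau$: $f(Y(t))\to 0$; the $ds$-integral converges since its integrand extends continuously to $[0,\tau]$ (by $0$ at zeros of $Y$); and the stochastic integral converges as soon as $\int_0^\tau(\log Y(s))^2ds<\infty$ a.s., which I would obtain by the same device with $h$ such that $h''(y)=(\log y)^2$, where now the resulting stochastic integral has a \emph{bounded} integrand ($h'(y)=y(\log y)^2-2y\log y+2y\to 0$ as $y\to 0$), so the iteration terminates. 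This proves $\int_0^\tau\frac{ds}{Y(s)}<\infty$ a.s. For the blow-up I would first check that $L(\tau+\gamma)>L(\tau)$ a.s.\ for each fixed $\gamma>0$: otherwise, on an event of positive probability, \eqref{equal_1} makes $Y$ on $[\tau,\tau+\gamma]$ a genuine Ornstein-Uhlenbeck process started at $Y(\tau)=0$, which by the strong Markov property at $\tau$ is the Gaussian process $t\mapsto\frac\sigma2\int_\tau^t e^{-b(t-s)/2}dW(s)$, negative on $(\tau,\tau+\gamma]$ with positive probability, contradicting $Y\ge 0$. Next, applying the Tanaka-Meyer formula to $Y=Y^+$ and using that $L$ grows only on the Lebesgue-null set $\{Y=0\}$, one identifies the reflection function with half the local time of $Y$ at $0$: $L=\frac12\ell^{0}$, where $\{\ell^{a}_t\}$ is the local time of $Y$ at level $a$. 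Hence $\ell^{0}$ strictly increases on $[\tau,\tau+\gamma]$, and the occupation times formula $\frac{\sigma^2}{4}\int_\tau^{\tau+\gamma}\frac{ds}{Y(s)}=\int_0^\infty\frac{\ell^a_{\tau+\gamma}-\ell^a_\tau}{a}\,da$ diverges at $a=0$, because $\ell^a_{\tau+\gamma}-\ell^a_\tau\to 2\big(L(\tau+\gamma)-L(\tau)\big)>0$ as $a\downarrow 0$. Combined with $\int_0^\tau\frac{ds}{Y(s)}<\infty$, this gives $\int_0^{\tau+\gamma}\frac{ds}{Y(s)}=\infty$ a.s. The genuinely delicate point in all of this is this last pair of statements — pinning down the exact rate at which $Y$ reaches and then leaves $0$ near $\tau$ — for which the self-improving It\^o identities and the local-time reading of the reflection function are the essential tools; part (a) and the identification of $Y$ as a ROU process are by comparison routine limit passages on top of the machinery already set up.
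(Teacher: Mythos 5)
Your proof is correct, but on the two genuinely delicate points of part (b) it takes a different road from the paper, so a comparison is worthwhile. Part (a) and the identification of $L$ as a reflection function are essentially the paper's argument (contradiction via the divergent piece of the drift in \eqref{eq: Ito}, then monotone limits), except that for uniqueness the paper squares a candidate solution by It\^o's formula and invokes uniqueness for the CIR equation \eqref{0.q}, whereas you run the monotone comparison of Remark \ref{rem: uniqueness of reflection function} directly on \eqref{equ:posit}; both work. For $\int_0^\tau Y(s)^{-1}ds<\infty$ the paper uses the occupation formula for the Ornstein--Uhlenbeck process $U$ together with H\"older continuity of its local time in the space variable (citing Rogers--Williams), while your bootstrapped It\^o identities with $f(y)=y\log y-y$ and then $h''(y)=(\log y)^2$ are self-contained and arguably more elementary: the second identity is exactly what is needed to show the bracket of $\int_0^{\cdot}\log Y\,dW$ is finite at $\tau$, hence that this local martingale converges there. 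For the blow-up past $\tau$ the paper is more direct: if $\int_0^{\tau+\gamma}Y^{-1}<\infty$ then the drift term in \eqref{eq: Ito} tends to $0$, so $L\equiv 0$ on $[0,\tau+\gamma]$, $Y$ coincides there with the Gaussian OU process, and Ylvisaker's non-tangency gives the contradiction. Your route --- first $L(\tau+\gamma)>L(\tau)$ a.s.\ by the same non-tangency fact, then $L=\frac12\ell^0$ via Tanaka, then divergence of $\int_0^{\cdot}(\ell^a_{\tau+\gamma}-\ell^a_\tau)a^{-1}\,da$ using right-continuity of $a\mapsto\ell^a$ --- is longer but yields as a byproduct the identification of the reflection function with the semimartingale local time at zero, which the paper only obtains separately in Remark \ref{rem: reflection function is local time}. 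Two small points to tighten: the OU process restarted at $0$ takes negative values in every right-neighbourhood of $\tau$ \emph{almost surely} (Blumenthal $0$--$1$ law applied after $\tau$), not merely with positive probability --- you need the a.s.\ version to conclude that the event $\{L(\tau+\gamma)=L(\tau)\}$ is null; and in the monotone-convergence step of (a) note that $X(s)(X(s)+\varepsilon)^{-3/2}\uparrow Y(s)^{-1}\mathbbm{1}_{\{Y(s)>0\}}$, which identifies the limit with $\int_0^t Y(s)^{-1}ds$ only because $\{s:Y(s)=0\}$ has zero Lebesgue measure.
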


\begin{proof}
    \emph{Case (a): $a > \frac{\sigma^2}{4}$.} Denote $p := a - \frac{\sigma^2}{4} > 0$. Our goal is to prove that the integral
    \[
        \int_0^t \frac{1}{\sqrt{X(s)}} ds = \int_0^t \frac{1}{Y(s)} ds
    \]
    is finite a.s. Define $A(t) := \left\{ \omega\in\Omega:~\int_0^t \frac{1}{\sqrt{X(s)}}ds = +\infty \right\}$ and assume that for some $t>0$:  $\mathbb P\left(A(t)\right) > 0$. Fix $T>t$, the corresponding sequence $\{\varepsilon_n,~n\ge 1\}$ such that convergence \eqref{eq: a.s.} holds and an arbitrary $\omega\in A(t) \cap \Omega'$, where $\Omega'\subset \Omega$, $\mathbb P(\Omega') = 1$, is the set where \eqref{eq: a.s.} takes place (in what follows, $\omega$ in brackets will be omitted). Then
    \begin{align*}
        \int_0^t &\left( \frac{a}{\sqrt{X(s) + \varepsilon_n}} - \frac{\sigma^2}{4} \frac{X(s)}{(X(s) + \varepsilon_n)^{\frac{3}{2}}} \right)ds 
        \\
        &= \frac{\sigma^2}{4} \int_0^t  \left( \frac{1}{\sqrt{X(s) + \varepsilon_n}} - \frac{X(s)}{(X(s) + \varepsilon_n)^{\frac{3}{2}}} \right)ds + p \int_0^t \frac{1}{\sqrt{X(s) + \varepsilon_n}} ds.
    \end{align*}
    Obviously, for all $s\in[0,t]$
    \[
        \frac{1}{\sqrt{X(s) + \varepsilon_n}} \ge \frac{X(s)}{(X(s) + \varepsilon_n)^{\frac{3}{2}}} \quad a.s.,
    \]
    so, for $\omega \in A(t) \cap \Omega'$
    \[
        \int_0^t \left( \frac{a}{\sqrt{X(s) + \varepsilon_n}} - \frac{\sigma^2}{4} \frac{X(s)}{(X(s) + \varepsilon_n)^{\frac{3}{2}}} \right)ds \to \infty \quad a.s., \quad n \to \infty,
    \]
    whence, taking into account \eqref{eq: Ito}--\eqref{eq: convergence CIR 2}, we obtain that
    \[
        \sqrt{X(t)} - \sqrt{X(0)} + \frac{b}{2} \int_0^t \sqrt{X(s)} ds - \frac{\sigma}{2} W(t) = \infty,
    \]
    which is impossible a.s. We get a contradiction, whence $\mathbb P(A(t)) = 0$ for all $t\ge 0$ and $\int_0^t \frac{1}{\sqrt{X(s)}} ds = \int_0^t \frac{1}{Y(s)} ds < \infty$ a.s. By going to the limit in \eqref{eq: Ito}, we immediately get \eqref{equ:posit}.
    
    Concerning the uniqueness of solution to \eqref{equ:posit}, let $\widetilde Y(t)$ be any of
    its non-negative  solutions. Then, by It\^o's formula,
    \[
    	\widetilde Y^2(t) = X(0) + \int_0^t \left(a - b \widetilde Y^2(s)\right) ds + \sigma \int_0^t \widetilde Y(s) dW(s)
    \]
    so $\widetilde Y$ satisfies equation $\eqref{0.q}$ and thus coincides with $X$. Therefore
    \[
        \widetilde Y(t) = \sqrt{X(t)} = Y(t) \quad a.s., \quad t\ge 0.
    \]
    
    \emph{Case (b): $a = \frac{\sigma^2}{4}$.} Fix $T>0$ and take the corresponding sequence $\{\varepsilon_n,~n\ge 1\}$ such that \eqref{eq: a.s.} holds. By \eqref{eq: Ito}, for any $t\in[0,T]$
    \begin{align*}
        \sqrt{X(t) + \varepsilon_n} &= \sqrt{X(0) + \varepsilon_n} + \frac{1}{2} \int_0^t \left( \frac{a}{\sqrt{X(s) + \varepsilon_n}} - \frac{\sigma^2}{4} \frac{X(s)}{(X(s) + \varepsilon_n)^{\frac{3}{2}}} \right)ds 
        \\
        &\quad - \frac{1}{2} \int_0^t \frac{b X(s)}{\sqrt{X(s) + \varepsilon_n}} ds + \frac{\sigma}{2} \int_0^t \frac{\sqrt{X(s)}}{\sqrt{X(s) + \varepsilon_n}} dW(s)
        \\
        &= \sqrt{X(0) + \varepsilon_n} + \frac{\sigma^2}{8} \int_0^t  \frac{ \varepsilon_n }{(X(s) + \varepsilon_n)^{\frac{3}{2}}} ds 
        \\
        &\quad - \frac{1}{2} \int_0^t \frac{b X(s)}{\sqrt{X(s) + \varepsilon_n}} ds + \frac{\sigma}{2} \int_0^t \frac{\sqrt{X(s)}}{\sqrt{X(s) + \varepsilon_n}} dW(s),
    \end{align*}
    and \eqref{eq: the reflected piece a.s.} implies that there exists $\Omega'\subset\Omega$, $\mathbb P(\Omega')=1$, such that for all $\omega\in\Omega'$ the limit
    \[
        L(t) := \lim_{n\to \infty} \frac{\sigma^2}{8} \int_0^t \frac{\varepsilon_n}{\left(X(s) + \varepsilon_n\right)^\frac{3}{2}} ds
    \]
    is well-defined and finite for all $t\in[0,T]$. It is evident that $L(0) = 0$ a.s. due to continuity of $X$ and the fact that $X(0)>0$. Moreover, since a.s. 
    \begin{equation*}
        L(t) = \sqrt{X(t)} - \sqrt{X(0)} + \frac{b}{2} \int_0^t \sqrt{X(s)} ds - \frac{\sigma}{2} W(t) \quad t\in[0,T], 
    \end{equation*}
    $L$ is continuous in $t$. Furthermore,
    \[
        \int_0^{t_1} \frac{\varepsilon_n}{\left(X(s) + \varepsilon_n\right)^\frac{3}{2}} ds \le \int_0^{t_2} \frac{\varepsilon_n}{\left(X(s) + \varepsilon_n\right)^\frac{3}{2}} ds
    \]
    for all $t_1 < t_2$, and whence $L$ is non-decreasing in $t$ a.s. Finally, if $X(t) = x > 0$, there exists an interval $[t_1, t_2]$ containing $t$ such that $X(s) > \frac{x}{2}$ for all $s\in [t_1, t_2]$ and thus
    \[
        L(t_1) - L(t_2) = \lim_{n\to \infty} \frac{\sigma^2}{8} \int_{t_1}^{t_2} \frac{\varepsilon_n}{\left(X(s) + \varepsilon_n\right)^\frac{3}{2}} ds \to 0, \quad n\to\infty,
    \]
    i.e. $L$ can increase only at points of zero hitting of $X$ that coincide with the ones of $Y$. Taking into the account all of the above as well as an arbitrary choice of $T$, $L$ is the reflection function for $Y$ and the latter is indeed a ROU process.
    
    Now, let us prove that $\int_0^\tau \frac{1}{Y_s}ds < \infty$ a.s. Consider a standard Ornstein-Uhlenbeck process $U = \{U(t),~t \ge 0\}$ of the form
    \begin{equation}\label{eq: OU process U}
        U(t) = \sqrt{X(0)} - \frac{b}{2} \int_0^t U(s) ds + \frac{\sigma}{2} W(t), 
    \end{equation}
    with $W$ being the same Brownian motion that drives $X$. It is evident that $Y$ coincides with $U$ until $\tau$ a.s. and thus it is sufficient to prove that $\int_0^\tau \frac{1}{U(s)}ds < \infty$ a.s. For any $\varepsilon > 0$ consider
    \[
        {\frac{\sigma^2}{4}}\int_0^\tau \frac{1}{U(s)} \mathbbm 1_{\{\varepsilon < U(s) < 1\}} ds = \int_{\varepsilon}^{1} \frac{L_U(\tau,x)}{x} dx,
    \]
    where $L_U$ denotes the local time of $U$, and observe that
    \[
        {\frac{\sigma^2}{4}} \int_0^\tau \frac{1}{U(s)}ds \le \lim_{\varepsilon\downarrow 0} \int_0^\tau \frac{1}{U(s)} \mathbbm 1_{\{\varepsilon < U(s) < 1\}} ds = \int_0^1 \frac{L_U(\tau,x)}{x}dx.
    \]
    Computations similar to the ones in \cite[Section IV.44]{Rogers_Williams_2000} indicate that the local time $L_U(t,x)$ of $U$ is H\"older continuous in $x$ up to order $\frac{1}{2}$ over bounded time intervals and thus $\int_0^\tau \frac{1}{U(s)}ds = \int_0^\tau \frac{1}{Y(s)}ds < \infty$ a.s.

	Finally, assume that  for some $\gamma > 0$,
	\[
	    \int_0^{\tau+\gamma} \frac{1}{Y(s)}ds < \infty
	\]
	with positive probability. On $\omega \in \Omega$ where this property holds, we have that
	\begin{align*}
		\int_0^{\tau + \gamma}& \frac{ds}{\sqrt{X(s) + \varepsilon}} - \int_0^{\tau + \gamma} \frac{X(s)}{(X(s) + \varepsilon)^{\frac{3}{2}}}ds 
		\\
		&\to \int_0^{\tau+\gamma} \frac{1}{Y(s)}ds - \int_0^{\tau+\gamma} \frac{1}{Y(s)}ds = 0, \quad \varepsilon \to 0.
	\end{align*}
	Therefore, for such $\omega$, $Y$ satisfies the equation of the form
	\[
		Y(t) = Y(0) - \frac{b}{2} \int_0^t Y(s) ds + \frac{\sigma}{2} W(t)
	\]
	on the interval $[0,\tau + \gamma]$, i.e. such paths of $Y$ coincide with the corresponding paths of the Ornstein-Uhlenbeck process $U$ defined by \eqref{eq: OU process U} up until $\tau + \gamma$. This implies that $U(\tau) = 0$ and $U$ is nonnegative on the interval $[\tau, \tau+\gamma]$ for such $\omega$, which is impossible due to the non-tangent property of Gaussian processes stated by \cite{Ylvisaker1968}, see also \cite{Piterbarg2015}.
\end{proof}

\begin{remark}\label{rem: epsilonn can be repaced by epsilon}
    Since the integral $\int_0^t \frac{1}{\sqrt{X(s)}} ds$ is finite a.s. for $a > \frac{\sigma^2}{4}$,
    \begin{gather*}
        \sqrt{X(t) + \varepsilon} - \sqrt{X(0) + \varepsilon} - \frac{1}{2} \int_0^t \left( \frac{a}{\sqrt{X(s) + \varepsilon}} - \frac{\sigma^2}{4} \frac{X(s)}{(X(s) + \varepsilon)^{\frac{3}{2}}} \right)ds 
        \\
        + \frac{1}{2} \int_0^t \frac{b X(s)}{\sqrt{X(s) + \varepsilon}} ds
        \\
        \xrightarrow{a.s.} \sqrt{X(t)} - \sqrt{X(0)} - \frac{1}{2}\left(a - \frac{\sigma^2}{4}\right) \int_0^t \frac{1}{\sqrt{X(s)}} ds + \frac{b}{2} \int_0^t \sqrt{X(s)} ds < \infty
    \end{gather*}
    as $\varepsilon \to 0$. Therefore, taking into account \eqref{eq: Ito},
    \[
        \int_0^t \frac{\sqrt{X(s)}}{\sqrt{X(s) + \varepsilon}} dW(s) \xrightarrow{a.s.} W(t), \quad \varepsilon \to 0.
    \]
\end{remark}

As a corollary of Theorem \ref{th: existence of integral in equation for Y}, we have a representation of the reflection function of the ROU process as the limit of integral functionals of the CIR processes. It is interesting that the reflection function is singular w.r.t. the Lebesgue measure (see Remark \ref{rem: reflection function is local time}) while the processes that converge to it are absolutely continuous a.s.

\begin{theorem}\label{th: representation of ROU process}
    Let $\{W(t),~t\ge 0\}\}$ be a continuous modification of a standard Brownian motion, $Y(0)$, $b$, $\sigma > 0$ be given constants and  $\{\varepsilon_n,~n\ge 1\}$ be an arbitrary sequence such that $\varepsilon_n \downarrow 0$, $n\to\infty$. For any $\varepsilon_n$ from this sequence, consider the CIR process $X_{\varepsilon_n} = \{X_{\varepsilon_n}(t),~t\ge 0\}$ given by
    \[
        X_{\varepsilon_n}(t) = X(0) + \int_0^t \left(\frac{\sigma^2}{4} + \varepsilon_n - bX_{\varepsilon_n}(s)  \right) ds + \sigma\int_0^t \sqrt{X_{\varepsilon_n}(s)}dW(s)
    \]
    and denote its square root by $Y_{\varepsilon_n}(t) := \sqrt{X_{\varepsilon_n}(t)}$.
    Then, with probability 1,
    \begin{itemize}
        \item[1)] the limit $\lim_{n\to\infty } Y_{\varepsilon_n}(t) =: Y(t)$ is well-defined, finite and non-negative for any $t\ge 0$;
        \item[2)] the limit process $Y = \{Y(t),~t\ge 0\}$ is a ROU process satisfying the equation of the form
        \begin{equation*}
            Y(t) = Y(0) - \frac{b}{2} \int_0^t Y(s) ds + \frac{\sigma}{2} W(t) + L(t),\quad t\ge 0,
        \end{equation*}
        with $Y(0) = \sqrt{X(0)} >0$ and $L$ being the reflection function for $Y$;
        \item[3)] for any $T>0$
        \begin{equation}\label{eq: uniform convergence 1, standard case}
            \sup_{t\in[0,T]} |Y(t) - Y_{\varepsilon_n}(t)| \to 0, \quad n\to\infty,
        \end{equation}
        and
        \begin{equation}\label{eq: uniform convergence 2, standard case}
            \sup_{t\in[0,T]} \left|L(t) - \frac{1}{2}\int_0^t \frac{\varepsilon_n}{Y_{\varepsilon_n}(s)}ds\right| \to 0, \quad n\to\infty.
        \end{equation}
    \end{itemize}
\end{theorem}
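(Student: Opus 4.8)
The plan is to realise $Y_{\varepsilon_n}$ as the square root of a \emph{supercritical} Cox--Ingersoll--Ross process and to compare it, pathwise, with the square root of the critical one. Let $X$ be the CIR process \eqref{0.q} with $a=\frac{\sigma^2}{4}$ and with the same $X(0),b,\sigma$ and driving $W$, let $Y:=\sqrt{X}$ and let $L$ be its reflection function, so that by Theorem~\ref{th: existence of integral in equation for Y}(b) the pair $(Y,L)$ solves \eqref{equal_1}; this is exactly the ROU process asserted in 1)--2). Since $a_n:=\frac{\sigma^2}{4}+\varepsilon_n>\frac{\sigma^2}{4}$, Theorem~\ref{th: existence of integral in equation for Y}(a) applies to each $X_{\varepsilon_n}$: the integral $\int_0^t\frac{ds}{Y_{\varepsilon_n}(s)}$ is a.s.\ finite and
\[
    Y_{\varepsilon_n}(t)=\sqrt{X(0)}+\frac{\varepsilon_n}{2}\int_0^t\frac{ds}{Y_{\varepsilon_n}(s)}-\frac b2\int_0^t Y_{\varepsilon_n}(s)\,ds+\frac\sigma2 W(t).
\]
(A comparison argument for one-dimensional SDEs driven by the common $W$, cf.\ \cite{IW}, shows moreover that $n\mapsto X_{\varepsilon_n}(t)$ is a.s.\ nonincreasing and bounded below by $X(t)$, which already secures the a.s.\ existence of a finite, nonnegative pointwise limit in 1); the quantitative bound below subsumes this.)

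The heart of the argument is to control $D_n:=Y_{\varepsilon_n}-Y$. Subtracting \eqref{equal_1} from the equation for $Y_{\varepsilon_n}$ displayed above, the $\frac\sigma2 W$ terms cancel, so $D_n$ is a continuous \emph{finite-variation} process with $D_n(0)=0$ and
\[
    D_n(t)=\frac{\varepsilon_n}{2}\int_0^t\frac{ds}{Y_{\varepsilon_n}(s)}-\frac b2\int_0^t D_n(s)\,ds-L(t).
\]
Because $D_n$ has finite variation, no quadratic-variation correction appears in the chain rule and
\[
    D_n^2(t)=\int_0^t\frac{\varepsilon_n D_n(s)}{Y_{\varepsilon_n}(s)}\,ds-b\int_0^t D_n^2(s)\,ds-2\int_0^t D_n(s)\,dL(s).
\]
Each term is handled by a sign consideration: first, $\frac{\varepsilon_n D_n}{Y_{\varepsilon_n}}=\varepsilon_n\bigl(1-\frac{Y}{Y_{\varepsilon_n}}\bigr)\le\varepsilon_n$ since $Y,Y_{\varepsilon_n}\ge 0$, and the integral is finite because $\int_0^t\frac{ds}{Y_{\varepsilon_n}(s)}<\infty$ a.s.\ and $Y$ is bounded on $[0,t]$; second, $-b\int_0^t D_n^2(s)\,ds\le 0$; and third, since $dL$ is carried by the zero set of $Y$, on which $D_n=Y_{\varepsilon_n}\ge 0$, we get $-2\int_0^t D_n(s)\,dL(s)\le 0$. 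Hence $D_n^2(t)\le\varepsilon_n t$, i.e.
\[
    \sup_{t\in[0,T]}|Y_{\varepsilon_n}(t)-Y(t)|\le\sqrt{\varepsilon_n T}\longrightarrow 0\quad a.s.,\ n\to\infty.
\]

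This single estimate yields the theorem: 1) and the uniform convergence \eqref{eq: uniform convergence 1, standard case} are immediate, while 2) is exactly Theorem~\ref{th: existence of integral in equation for Y}(b), uniqueness of the limiting ROU being supplied by Remark~\ref{rem: uniqueness of reflection function}. For \eqref{eq: uniform convergence 2, standard case}, rearranging the equation for $D_n$ to isolate $L(t)$ gives $\frac12\int_0^t\frac{\varepsilon_n}{Y_{\varepsilon_n}(s)}\,ds-L(t)=D_n(t)+\frac b2\int_0^t D_n(s)\,ds$, whence
\[
    \sup_{t\in[0,T]}\Bigl|L(t)-\tfrac12\int_0^t\tfrac{\varepsilon_n}{Y_{\varepsilon_n}(s)}\,ds\Bigr|\le\Bigl(1+\tfrac{bT}{2}\Bigr)\sup_{t\in[0,T]}|D_n(t)|\le\Bigl(1+\tfrac{bT}{2}\Bigr)\sqrt{\varepsilon_n T}\longrightarrow 0\quad a.s.
\]
The argument is short once the right quantity ($D_n^2$) is isolated; the only places needing care — and where I expect the (minor) obstacle to sit — are the verification that $D_n$ is genuinely of finite variation so that It\^o's correction drops out, the a.s.\ finiteness of $\int_0^t\frac{\varepsilon_n D_n(s)}{Y_{\varepsilon_n}(s)}\,ds$ (which rests on Theorem~\ref{th: existence of integral in equation for Y}(a)), and, above all, the sign of the term $-2\int_0^t D_n(s)\,dL(s)$, which exploits that $L$ grows only where $Y$ vanishes and that there $D_n=Y_{\varepsilon_n}\ge 0$.
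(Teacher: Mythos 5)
Your proof is correct, but it takes a genuinely different route from the paper. The paper proceeds by monotonicity: the comparison theorem gives $Y_{\varepsilon_n}(t)\downarrow Y(t)$ pointwise, the candidate reflection function is defined as $L(t):=\lim_n\frac12\int_0^t\frac{\varepsilon_n}{Y_{\varepsilon_n}(s)}ds$, and the bulk of the work is a hands-on verification that this $L$ is continuous (the left-continuity step requires a delicate case analysis using the H\"older continuity of $W$), nondecreasing and supported on the zero set of $Y$, after which Dini's theorem upgrades the monotone pointwise convergences to uniform ones. You instead identify the limit a priori as $Y=\sqrt{X}$ with $X$ the critical CIR process, import the pair $(Y,L)$ from Theorem~\ref{th: existence of integral in equation for Y}(b), and run a pathwise energy estimate on $D_n=Y_{\varepsilon_n}-Y$: since the $\frac{\sigma}{2}W$ terms cancel, $D_n$ is continuous of finite variation, the chain rule for Stieltjes integrals applies without It\^o correction, and the three sign observations (in particular $\int_0^t D_n\,dL\ge 0$ because $dL$ lives on $\{Y=0\}$ where $D_n=Y_{\varepsilon_n}\ge 0$) give $D_n^2(t)\le\varepsilon_n t$. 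The steps you flag as delicate all hold: $D_n$ is indeed BV (absolutely continuous part minus the monotone $L$), the integrand $\varepsilon_n D_n/Y_{\varepsilon_n}=\varepsilon_n(1-Y/Y_{\varepsilon_n})$ is bounded above by $\varepsilon_n$ Lebesgue-a.e.\ (the set $\{Y_{\varepsilon_n}=0\}$ is Lebesgue-null since $\int_0^t Y_{\varepsilon_n}^{-1}ds<\infty$), and the support property of $dL$ is exactly what Definition~\ref{def: reflection function} provides. Your argument buys an explicit a.s.\ rate $\sup_{[0,T]}|Y_{\varepsilon_n}-Y|\le\sqrt{\varepsilon_n T}$ and $\sup_{[0,T]}|L-\frac12\int_0^\cdot\varepsilon_n Y_{\varepsilon_n}^{-1}ds|\le(1+\frac{bT}{2})\sqrt{\varepsilon_n T}$, which the paper's Dini argument does not, and it works for the continuum limit $\varepsilon\downarrow 0$ rather than only along a sequence; the price is that it leans entirely on Theorem~\ref{th: existence of integral in equation for Y}(b) for the existence of the limiting ROU pair, whereas the paper's proof reconstructs $L$ from scratch as a limit of the integral functionals.
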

\begin{proof}
    Denote $X$ the CIR process of the form
    \[
        X(t) = X(0) + \int_0^t \left(\frac{\sigma^2}{4} - bX(s)  \right) ds + \sigma\int_0^t \sqrt{X(s)}dW(s).
    \]
    By Theorem \ref{th: existence of integral in equation for Y}, there exists $\Omega'\subset\Omega$, $\mathbb P(\Omega')=1$, such that for all $\omega \in \Omega'$ $X$ and each $Y_{\varepsilon_n}$, $n\ge 1$, are continuous and the latter satisfy equations of the form
    \[
        Y_{\varepsilon_n}(t) = Y(0) + \frac{1}{2}\int_0^t \frac{\varepsilon_n}{Y_{\varepsilon_n}(s)}ds - \frac{b}{2} \int_0^t Y(s)ds + \frac{\sigma}{2}W(t), \quad t\ge 0,
    \]
    with the integral $\int_0^t \frac{1}{Y_{\varepsilon_n}(s)}ds < \infty$. Furthermore, since each $X_{\varepsilon_n} = Y^2_{\varepsilon_n}$ is a CIR process that satisfies conditions of the comparison theorem from \cite{IW1977}, this $\Omega'$ can be chosen such that for all $\omega\in\Omega'$
    \begin{equation}\label{eq: decreasing Y and X}
        Y_{\varepsilon_{n}}(\omega, t) \ge Y_{\varepsilon_{n+1}}(\omega, t) \ge \sqrt{X(t)} \ge 0,\quad t\ge 0, \quad n\ge 1.
    \end{equation}
    
    Fix $\omega\in\Omega'$ (in what follows, we will omit $\omega$ in brackets for notational simplicity). Since the sequence $\{Y_{\varepsilon_n}(t),~n\ge 1\}$ is non-increasing for each $t\ge 0$, there exists a pointwise limit $Y(t) := \lim_{n\to\infty} Y_{\varepsilon_n}(t) \in [0,\infty)$. Moreover, it is evident that $\lim_{n\to\infty} \int_0^t Y_{\varepsilon_n}(s) ds = \int_0^t Y(s) ds$ and since
    \begin{align*}
        Y(t) &= \lim_{n\to\infty} Y_{\varepsilon_n}(t) 
        \\
        &= Y(0) - \lim_{n\to\infty}\frac{b}{2}\int_0^t Y_{\varepsilon_n}(s)ds + \frac{\sigma}{2} W(t) +  \lim_{n\to\infty}\frac{1}{2} \int_0^t \frac{\varepsilon_n}{Y_{\varepsilon_n}(s)}ds 
        \\
        &= Y(0)  - \frac{b}{2}\int_0^t Y(s) ds + \frac{\sigma}{2} W(t) + \lim_{n\to\infty}\frac{1}{2}\int_0^t \frac{\varepsilon_n}{Y_{\varepsilon_n}(s)}ds,
    \end{align*}
    the limit $L(t) :=  \lim_{n\to\infty}\frac{1}{2}\int_0^t \frac{\varepsilon_n}{Y_{\varepsilon_n}(s)}ds$ is well-defined, nonnegatove and finite. 
    
    In order to obtain the claim of the theorem, it is sufficient to check that the function $L$ defined above is indeed a reflection function for $Y$, i.e. is continuous and nondecreasing process that starts at zero and the points of growth of which occur only at zeros of $Y$. Note that continuity of $L$ would also imply the uniform convergences \eqref{eq: uniform convergence 1, standard case} and \eqref{eq: uniform convergence 2, standard case} on each compact $[0,T]$. Indeed, since $Y_{\varepsilon_n}(t) \ge Y_{\varepsilon_{n+1}}(t)$ for all $t\ge 0$, $n\ge 1$ and continuity of $L$ would imply continuity of $Y$, Dini's theorem guarantees \eqref{eq: uniform convergence 1, standard case}. The same argument applies to \eqref{eq: uniform convergence 2, standard case}: the right-hand side of 
    \[
        \frac{1}{2} \int_0^t \frac{\varepsilon_n}{Y_{\varepsilon_n}(s)}ds = Y_{\varepsilon_n}(t) - Y(0) + \frac{b}{2} \int_0^t Y_{\varepsilon_n}(s)ds - \frac{\sigma}{2}W(t)
    \]
    is non-increasing w.r.t. $t$, therefore for each $t\ge 0$ and $n\ge 1$
    \[
        \frac{1}{2} \int_0^t \frac{\varepsilon_n}{Y_{\varepsilon_n}(s)}ds \ge \frac{1}{2} \int_0^t \frac{\varepsilon_{n+1}}{Y_{\varepsilon_{n+1}}(s)}ds
    \]
    and Dini's theorem implies \eqref{eq: uniform convergence 2, standard case} as well.
    
    By \eqref{eq: decreasing Y and X}, continuity of $X$ and the fact that $X(0) >0$, there exists an interval $[0,t_0)$ such that for all $t\in[0,t_0)$ and $n\ge 1$ $Y_{\varepsilon_n}(t) \ge \frac{Y(0)}{2}$. Thus for any $t\in [0,t_0)$
    \[
        L(t) = \lim_{n\to\infty} \frac{1}{2}\int_0^t \frac{\varepsilon_n}{Y_{\varepsilon_n}(s)}ds \le \lim_{n\to\infty}\frac{t_0\varepsilon_n}{Y(0)} = 0,
    \]
    i.e. $L(t) = 0$ for all $t\in[0,t_0]$.
    
    For reader's convenience, we will split the further proof into four steps.
    
    \textbf{Step 1: $L$ in non-decreasing.} Monotonicity of $L$ is obvious since for any fixed $n\ge1$ and $t_1 < t_2$
    \[
        \int_0^{t_1} \frac{\varepsilon_n}{Y_{\varepsilon_n}(s)}ds \le \int_0^{t_2} \frac{\varepsilon_n}{Y_{\varepsilon_n}(s)}ds.
    \]
    
    \textbf{Step 2: right-continuity.} Let us show that $L$ is continuous from the right. For any fixed $t\ge 0$, denote $L({t+}) := \lim_{\delta \downarrow 0} L({t+\delta})$ (the right limit exists since $L$ is non-decreasing) and assume that $L({t+}) - L({t}) = \alpha > 0$. Due to the monotonicity of $L$, this implies that for all $\delta > 0$
    \begin{equation}\label{proofeq: L is not right continuous}
        L({t+\delta}) - L({t}) \ge \alpha > 0.
    \end{equation}
    Now, take $n_0$ such that for all $n\ge n_0$
    \[
        \frac{1}{2} \int_0^t \frac{\varepsilon_n}{Y_{\varepsilon_n} (s)}ds \in \left[L(t), L(t) + \frac{\alpha}{4}\right)
    \]
    and $\delta_0 >0$ such that  
    \[
        \frac{1}{2} \int_0^{t+\delta_0} \frac{\varepsilon_{n_0}}{Y_{\varepsilon_{n_0}}(s)}ds \in \left[L(t), L(t) + \frac{\alpha}{2}\right).
    \]
    As it was noted previously, for each $s\ge 0$ the values of $\frac{1}{2}\int_0^s \frac{\varepsilon_n}{Y_{\varepsilon_n}(u)}du$ are non-increasing when $n\to\infty$. Thus for any $n \ge n_0$
    \[
        \frac{1}{2} \int_0^{t+\delta_0} \frac{\varepsilon_n}{Y_{\varepsilon_n}(s)}ds \le \frac{1}{2} \int_0^{t+\delta_0} \frac{\varepsilon_{n_0}}{Y_{\varepsilon_{n_0}}(s)}ds \le L(t) + \frac{\alpha}{2},
    \]
    i.e.
    \[
        L({t+}) \le \lim_{n\to\infty} \frac{1}{2} \int_0^{t+\delta_0} \frac{\varepsilon_n}{Y_{\varepsilon_n}(s)}ds < L(t) + \frac{\alpha}{2},
    \]
    which contradicts \eqref{proofeq: L is not right continuous}. Therefore, $L({t+}) - L(t) = 0$, i.e. $L$ is right-continuous. 
    
    \textbf{Step 3: left-continuity.} Now, let us show that $L$ is continuous from the left. Assume that it is not true and there exists $t > 0$ such that $L({t}) - L({t-}) > 0$ (note that $L({t-}) = \lim_{\delta\downarrow 0} L({t-\delta})$ is well-defined due to the monotonicity of $L$). Since $L$ may have only positive jumps, so does $Y$ and, moreover, the points of jumps of $L$ and $Y$ coincide. This implies that $Y({t}) - Y({t-}) > 0$ and we now consider two cases.
    
    \emph{Case 1: $Y({t-}) = y > 0$.} Then $Y(t) = Y({t+}) > y$ (note that $Y$ is right-continuous by Step 2) and there exists an interval $[t-\delta, t+\delta]$ such that $Y_{\varepsilon_n} (s) \ge Y(s) > \frac{y}{2}$ for all $s \in [t-\delta, t+\delta]$. This implies that
    \[
        L({t+\delta}) - L({t-\delta}) = \lim_{n\to\infty} \frac{1}{2}\int_{t-\delta}^{t+\delta} \frac{\varepsilon_n}{Y_{\varepsilon_n}(s)}ds \le \lim_{n\to\infty} \frac{2\delta \varepsilon_n}{y} = 0,
    \]
    i.e. $L$ cannot have a jump at $t$. This means that $Y$ cannot have a jump at point $t$ either and we obtain a contradiction.
    
    \emph{Case 2: $Y({t-}) = 0$ and $Y({t+}) = Y({t}) = y > 0$.} Fix $T>t$, $\lambda \in \left(0,\frac{1}{2}\right)$ and let $\Lambda$ be a random variable such that for all $t_1, t_2 \in [0,T]$
    \[
        |W({t_1}) - W({t_2})| \le \Lambda |t_1 - t_2|^\lambda.
    \]
    Take $n_1 \ge 1$ and $\delta_1 > 0$ such that $\varepsilon_{n_1} + \delta_1 + \frac{\sigma\Lambda}{2} \delta_1^\lambda < y$ and note that there exists $\delta_2 < \delta_1$ such that $Y({t-\delta_2}) < \varepsilon_{n_1}$. Since $Y_{\varepsilon_n}(t-\delta_2) \downarrow Y({t-\delta_2})$ as $n\to\infty$, there exists $n_2 > n_1$ such that $Y_{\varepsilon_{n_2}}(t-\delta_2) < \varepsilon_{n_1} < y$. Moreover, $Y_{\varepsilon_{n_2}}(t) \ge Y(t) = y$ thus one can define
    \[
        \tau := \sup\{s\in (t-\delta_2, t),~Y_{\varepsilon_{n_2}}(s) =\varepsilon_{n_1}\}.
    \]
    Observe that $Y_{\varepsilon_{n_2}} (\tau) = \varepsilon_{n_1}$ and $Y_{\varepsilon_{n_2}}(s) \ge \varepsilon_{n_1}$ for all $s\in[\tau, t]$, whence
    \begin{align*}
        Y_{\varepsilon_{n_2}}(t) &= Y_{\varepsilon_{n_2}}(\tau) + \frac{1}{2}\int_\tau^t \frac{\varepsilon_{n_2}}{Y_{\varepsilon_{n_2}}(s)}ds - \frac{b}{2}\int_\tau^t Y_{\varepsilon_{n_2}}(s)ds + \frac{\sigma}{2} (W(t) - W(\tau))
        \\
        &\le \varepsilon_{n_1} + \frac{\varepsilon_{n_2}}{2\varepsilon_{n_1}} (t-\tau) + \frac{\sigma \Lambda}{2}(t-\tau)^\lambda
        \\
        &\le \varepsilon_{n_1} + \delta_1 + \frac{\sigma\Lambda}{2} \delta_1^\lambda < y,
    \end{align*}
    which contradicts the assumption that $Y_{\varepsilon_{n_2}}(t) \ge y$. This contradiction together with all of the above implies that $Y$ (and thus $L$) is continuous at each point $t \ge 0$.
    
    \textbf{Step 4: points of growth.} Now, let us prove that the points of growth of $L$ may occur only at zeros of $Y$. Indeed, let $t > 0$ be such that $Y(t) = y > 0$. Since $Y$ is continuous, there exists $\delta_3 > 0$ such that for any $s \in(t-\delta_3,t+\delta_3)$
    \[
        Y({s}) > \frac{y}{2} > 0.
    \]
    This, in turn, implies that for all $s \in(t-\delta_3,t+\delta_3)$ and $n\ge 1$
    \[
        Y_{\varepsilon_n}(s) \ge Y({s}) > \frac{y}{2} > 0
    \]
    and thus for any $\delta \in [0,\delta_3)$
    \[
        L({t+\delta}) - L({t-\delta}) = \lim_{n\to\infty} \frac{1}{2} \int_{t-\delta}^{t+\delta} \frac{\varepsilon_n}{Y_{\varepsilon_n}(s)}ds \le \lim_{n\to\infty} \frac{2\delta }{y} \varepsilon_n = 0.
    \]
    Therefore $L({t+\delta}) - L({t-\delta})=0$ and $L$ does not grow in some neighbourhood of $t$.
\end{proof}
	
\begin{remark}\label{rem: reflection function is local time}
    It is well-known (see e.g. \cite[Appendix A]{BallRoma1994} or \cite[Subsection 3.3.1]{Zhu2010}) that the absolute value of OU and ROU processes with non-zero mean reversion levels do not coincide. In turn, in the ``symmetric'' case with zero mean reversion parameter, absolute value of the OU process and ROU process have the same distribution but do not coincide pathwisely. Theorem \ref{th: existence of integral in equation for Y} allows to clarify this subtle difference in the following manner.
    
    Let $B = \{B(t),~t\ge 0\}$ be some standard Brownian motion and
    \[
        U(t) = U(0) - \frac{b}{2} \int_0^t U(s) ds + \frac{\sigma}{2} B(t), \quad t\ge 0,
    \]
    be a standard Ornstein-Uhlenbeck process with non-random positive initial value $U(0) > 0$. By It\^o's formula,
    \begin{align*}
        U^2(t) &= U^2(0) + \int_0^t \left( \frac{\sigma^2}{4} - b U^2(s)\right)ds + \sigma \int_0^t U(s) dB(s)
        \\
        & = U^2(0) + \int_0^t \left( \frac{\sigma^2}{4} - b U^2(s)\right)ds + \sigma \int_0^t |U(s)| \sign(U(s)) dB(s)
        \\
        &= U^2(0) + \int_0^t \left( \frac{\sigma^2}{4} - b U^2(s)\right)ds + \sigma \int_0^t |U(s)| dW(s),
    \end{align*}
    where $W(t) := \int_0^t \sign(U(s))dB(s)$ is a standard Brownian motion (which can be easily verified by Levy's characterization). Thus, the process $X(t) := U^2(t)$, $t\ge 0$, is a CIR process w.r.t. $W$. By Theorem \ref{th: existence of integral in equation for Y}, the square root process $Y(t) := \sqrt{X(t)}$, $t\ge 0$, is a reflected Ornstein-Uhlenbeck process with respect to $W$ satisfying the SDE of the form
    \begin{equation}\label{eq: reflection function as local time 1}
        Y(t) = U(0) - \frac{b}{2} \int_0^t Y(s) ds + \frac{\sigma}{2} W(t) + L(t), \quad t\ge 0,
    \end{equation}
    with $L$ being the reflection function for $Y$. Since $Y(t) =\sqrt{X(t)} = |U(t)|$, by Tanaka's formula
    \begin{equation}\label{eq: reflection function as local time 2}
    \begin{aligned}
        Y(t) &= U(0) + \int_0^t \sign(U(s)) dU(s) + L_U(t)
        \\
        &= U(0) - \frac{b}{2}\int_0^t \sign(U(s)) U(s) ds + \frac{\sigma}{2}\int_0^t \sign(U(s)) dB(s) + L_U(t)
        \\
        &= U(0) - \frac{b}{2}\int_0^t Y(s) ds + \frac{\sigma}{2} W(t) + L_U(t),
    \end{aligned}
    \end{equation}
    with $L_U$ being the local time of $U$ at zero. Comparing \eqref{eq: reflection function as local time 1} and \eqref{eq: reflection function as local time 2}, we obtain that $L(t) = L_U(t)$, i.e. the reflection function of the ROU process $Y$ coincides with local time at zero of the OU process $U$.
\end{remark}

\section{Fractional Cox-Ingersoll-Ross and fractional reflected Ornstein-Uhlenbeck processes}\label{sec: fractional case}
	
Let now $\{B^H(t),~t\ge 0\}$ be a continuous modification of a fractional Brownian motion with Hurst index $H>\frac{1}{2}$. Consider a stochastic differential equation of the form
\begin{equation}\label{eq: FCIR}
    Y^H(t) = Y(0) + \frac{1}{2} \int_0^t \left( \frac{a}{Y^H(s)} - b Y^H(s) \right)ds + \frac{\sigma}{2} dB^H(t), t\ge 0,
\end{equation}
where $Y(0) > 0$ is a given constant, $a$, $b$, $\sigma > 0$. According to \cite{MYuT2018} (see also \cite{DNMYT2020}), SDE \eqref{eq: FCIR} a.s. has a unique pathwise solution $\{Y^H(t),~t\ge 0\}$ such that $Y^H(t) > 0$ for all $t\ge 0$, and the subset of $\Omega$ where this solution exists does not depend on $Y(0)$, $a$, $b$ or $\sigma$ (in fact, the solution exists for all $\omega\in\Omega$ such that $B^H(\omega, t)$ is locally H\"older continuous in $t$). Moreover, it can be shown (see \cite[Theorem 1]{MYuT2018} or \cite[Subsection 4.1]{DNMYT2020}) that the process $X^H(t) = (Y^H(t))^2$, $t\ge 0$, satisfies the SDE of the form
\begin{equation}\label{eq: FCIR true form}
    X^H(t) = X(0) + \int_0^t (a-bX^H(s))ds + \sigma \int_0^t \sqrt{X^H(s)} dB^H(s), \quad t\ge 0,
\end{equation}
where $X(0) = Y^2(0)$ and the integral with respect to the fractional Brownian motion exists as the pathwise limit of the corresponding Riemann-Stieltjes integral sums. Taking into account the form of \eqref{eq: FCIR true form}, the process $\{X^H(t),~t\ge 0\}$ can be interpreted as a natural fractional generalisation of the Cox-Ingersoll-Ross process with $\{Y^H(t),~t\ge 0\}$ being its square root.

\begin{remark}
    It is evident that the solution to \eqref{eq: FCIR true form} is unique in the class of non-negative stochastic processes with paths that are H\"older-continuous up to the order $H$. Indeed, by the fractional pathwise counterpart of the It\^o's formula (see e.g. \cite[Theorem 4.3.1]{Zahle1998}) the square root of the solution must satisfy the equation \eqref{eq: FCIR} until the first moment of zero hitting. However, as it was noted above, the solution to \eqref{eq: FCIR} is unique and strictly positive a.s., i.e. never hits zero.
\end{remark}

Now, let us recall the definition of the reflected fractional Ornstein-Uhlenbeck (RFOU) process.
\begin{definition}
    Stochastic process $\widetilde Y^H = \{\widetilde Y^H(t),~t\ge 0\}$ is called a \emph{fractional reflected Ornshein-Uhlenbeck (RFOU) process} if it satisfies a stochastic differential equation of the form
    \begin{equation}\label{eq: fractional reflected OU process equation}
        \widetilde Y^H(t) = Y(0) - \widetilde{b} \int_0^t \widetilde Y^H(s) ds + \widetilde{\sigma} B^H(t) + \widetilde L^H(t),\quad t\ge 0,
    \end{equation}
    where $Y(0)$, $\widetilde{b}$ and $\widetilde{\sigma}$ are positive constants, $B^H = \{B^H(t),~t\ge 0\}$ is a fractional Brownian motion, $\{\widetilde L^H(t),~t \ge 0\}$ is a reflection function for $\widetilde Y^H$ in the sense of Definition \ref{def: reflection function} and $\widetilde Y^H \ge 0$ a.s.
\end{definition}

\begin{remark}
    For more details on properties of the RFOU process see e.g. \cite{LeeSong2016} and references therein. Note that, by the argument similar to the one stated in Remark \ref{rem: uniqueness of reflection function}, the solution $(Y^H, L^H)$ to the equation \eqref{eq: fractional reflected OU process equation} is unique.
\end{remark}

When it comes to the connection between FCIR and RFOU processes, there is a notable difference from the standard Brownian case discussed in section \ref{sec: standard case}: in the standard case the ROU process turned out to coincide with the square root of the CIR process with $a = \frac{\sigma^2}{4}$ which is not true for the fractional case. More precisely, if $a>0$, $X^H$ is strictly positive a.s. and thus $\sqrt{X^H}$ cannot coincide with the RFOU process. Furthermore, for $a=0$ \cite[Theorem 6]{MMS2015} claims existence and uniqueness of solution to \eqref{eq: FCIR true form} when $H\in\left(\frac{2}{3}, 1\right)$, and this solution turns out to stay in zero after hitting it, i.e. its square root is also different from the RFOU process. However, it is still possible to establish a clear connection between FCIR and RFOU processes highlighted in the next theorem.

\begin{theorem}\label{th: representation of FROU process}
    Let $\{B^H(t),~t\ge 0\}$ be a continuous modification of a fractional Brownian motion with Hurst index $H>\frac{1}{2}$, $Y(0)$, $b$, $\sigma > 0$ be given constants. For any $\varepsilon > 0$, consider a square root process $Y^H_\varepsilon = \{Y^H_{\varepsilon}(t),~t\ge 0\}$ given by
    \[
        Y^H_\varepsilon (t) = Y(0) + \frac{1}{2} \int_0^t\left( \frac{\varepsilon}{Y^H_\varepsilon(s)} - b Y^H_\varepsilon(s) \right)ds + \frac{\sigma}{2} B^H(t), \quad t\ge 0.
    \]
    Then, with probability 1,
    \begin{itemize}
        \item[1)] the limit $\lim_{\varepsilon \downarrow 0} Y^H_\varepsilon(t) =: Y^H(t)$ is well-defined, finite and non-negative for any $t\ge 0$;
        \item[2)] the limit process $Y^H = \{Y^H(t),~t\ge 0\}$ is a RFOU process satisfying the equation of the form
        \begin{equation}\label{eq: reflected limit of FCIR}
            Y^H(t) = Y(0) - \frac{b}{2} \int_0^t Y^H(s) ds + \frac{\sigma}{2} B^H(t) + L^H(t),\quad t\ge 0;
        \end{equation}
        \item[3)] for any $T>0$
        \[
            \sup_{t\in[0,T]} |Y^H(t) - Y^H_\varepsilon(t)| \to 0, \quad \varepsilon \downarrow 0,
        \]
        and
        \[
            \sup_{t\in[0,T]} \left|L^H(t) - \frac{1}{2}\int_0^t \frac{\varepsilon}{Y^H_\varepsilon(s)}ds\right| \to 0, \quad \varepsilon \downarrow 0.
        \]
    \end{itemize}
\end{theorem}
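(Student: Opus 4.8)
The plan is to mimic the structure of the proof of Theorem 2.6, using the pathwise nature of the fractional equation to reduce everything to deterministic arguments on the full-measure set $\Omega'$ where $B^H$ is locally H\"older continuous. First I would fix such an $\omega$ and note that, by \cite{MYuT2018}, for every $\varepsilon>0$ the process $Y^H_\varepsilon$ exists, is strictly positive, and $X^H_\varepsilon=(Y^H_\varepsilon)^2$ solves the FCIR equation \eqref{eq: FCIR true form} with drift parameter $\varepsilon$. The key monotonicity input is a pathwise comparison principle for \eqref{eq: FCIR} (or directly for \eqref{eq: FCIR true form}): since the coefficient $\tfrac{a}{2y}-\tfrac b2 y$ is increasing in $a$, for $\varepsilon_1>\varepsilon_2$ one gets $Y^H_{\varepsilon_1}(t)\ge Y^H_{\varepsilon_2}(t)$ for all $t\ge 0$, and all of these dominate the (degenerate) solution $Y^H_0$ of the drift-only equation $Y^H_0(t)=Y(0)-\tfrac b2\int_0^t Y^H_0(s)ds+\tfrac\sigma2 B^H(t)$ up to its first zero. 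Hence the pointwise limit $Y^H(t):=\lim_{\varepsilon\downarrow 0}Y^H_\varepsilon(t)\in[0,\infty)$ exists and is non-negative, giving part 1). Passing to the limit in the integral equation (the $-\tfrac b2\int_0^t Y^H_\varepsilon(s)ds$ term converges by monotone convergence, $B^H(t)$ is fixed), the quantity $L^H(t):=\lim_{\varepsilon\downarrow 0}\tfrac12\int_0^t\frac{\varepsilon}{Y^H_\varepsilon(s)}ds$ is well-defined, finite, non-negative, and $Y^H$ satisfies \eqref{eq: reflected limit of FCIR}.

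It then remains, exactly as in Theorem 2.6, to show that $L^H$ is a reflection function for $Y^H$: $L^H(0)=0$, $L^H$ is non-decreasing, continuous, and grows only at zeros of $Y^H$. Monotonicity in $t$ is immediate from positivity of the integrand for each fixed $\varepsilon$. That $L^H\equiv 0$ on an initial interval $[0,t_0]$ follows from $Y(0)>0$, continuity of $Y^H_0$ and the uniform lower bound $Y^H_\varepsilon(t)\ge Y^H_0(t)\ge Y(0)/2$ near $0$. The growth-only-at-zeros property is equally soft: if $Y^H(t)=y>0$ then by continuity $Y^H_\varepsilon(s)\ge Y^H(s)>y/2$ on a neighbourhood $(t-\delta_3,t+\delta_3)$, whence $L^H(t+\delta)-L^H(t-\delta)\le\lim_{\varepsilon\downarrow 0}2\delta\varepsilon/y=0$. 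Right-continuity of $L^H$ is obtained by the same three-$\varepsilon$ estimate as in Step 2 of the previous proof (choose $\varepsilon$ small so $\tfrac12\int_0^t\frac{\varepsilon}{Y^H_\varepsilon}\in[L^H(t),L^H(t)+\alpha/4)$, then $\delta_0$ small, then use monotonicity in $\varepsilon$). Once $L^H$ is shown continuous, Dini's theorem upgrades the monotone pointwise convergences $Y^H_\varepsilon\downarrow Y^H$ and $\tfrac12\int_0^t\frac{\varepsilon}{Y^H_\varepsilon}\downarrow L^H$ (the latter integral being non-increasing in $\varepsilon$ because it equals $Y^H_\varepsilon(t)-Y(0)+\tfrac b2\int_0^t Y^H_\varepsilon(s)ds-\tfrac\sigma2 B^H(t)$ and each term on the right is monotone) to uniform convergence on every $[0,T]$, proving part 3).

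The one genuinely different, and hardest, step is \textbf{left-continuity of $L^H$}, equivalently the absence of positive jumps of $Y^H$. In the Brownian proof, Case 2 ($Y(t-)=0$, $Y(t+)=y>0$) was handled via the H\"older continuity of $W$: one picks $\varepsilon_{n_1}$ and $\delta_1$ with $\varepsilon_{n_1}+\delta_1+\tfrac{\sigma\Lambda}{2}\delta_1^\lambda<y$, finds $\delta_2<\delta_1$ with $Y(t-\delta_2)<\varepsilon_{n_1}$, a large $n_2$ with $Y_{\varepsilon_{n_2}}(t-\delta_2)<\varepsilon_{n_1}$, defines $\tau$ as the last time before $t$ that $Y_{\varepsilon_{n_2}}$ equals $\varepsilon_{n_1}$, and bounds $Y_{\varepsilon_{n_2}}(t)\le\varepsilon_{n_1}+\tfrac{\varepsilon_{n_2}}{2\varepsilon_{n_1}}(t-\tau)+\tfrac{\sigma\Lambda}{2}(t-\tau)^\lambda<y$, a contradiction. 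The same argument transfers verbatim to the fractional setting: fix $T>t$ and $\lambda\in(H-\tfrac12,H)$ (or any $\lambda<H$), let $\Lambda=\Lambda(\omega)$ be the local H\"older constant of $B^H$ on $[0,T]$, replacing $\tfrac\sigma2|W(t)-W(\tau)|$ by $\tfrac\sigma2|B^H(t)-B^H(\tau)|\le\tfrac{\sigma\Lambda}{2}(t-\tau)^\lambda$; on $[\tau,t]$ one has $Y^H_{\varepsilon_{n_2}}(s)\ge\varepsilon_{n_1}$ so $\tfrac12\int_\tau^t\frac{\varepsilon_{n_2}}{Y^H_{\varepsilon_{n_2}}(s)}ds\le\tfrac{\varepsilon_{n_2}}{2\varepsilon_{n_1}}(t-\tau)$, and the drift term $-\tfrac b2\int_\tau^t Y^H_{\varepsilon_{n_2}}(s)ds\le 0$ is dropped. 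Case 1 ($Y(t-)=y>0$) is identical to the Brownian one. I expect essentially no obstacle beyond being careful that the pathwise comparison theorem for \eqref{eq: FCIR} indeed holds with the uniform full-measure exceptional set from \cite{MYuT2018}, and that the H\"older constant $\Lambda$ can be chosen to work simultaneously for the fixed $\omega$ on the compact $[0,T]$ — both of which are standard.
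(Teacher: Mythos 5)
Your proposal is correct and follows essentially the same route as the paper: the authors establish monotonicity in $\varepsilon$ via the pathwise comparison lemma of \cite{MYuT2018}, obtain the pointwise limits $Y^H$ and $L^H$, show $L^H$ vanishes on an initial interval using the fractional Ornstein--Uhlenbeck lower bound, and then declare the rest ``identical to the proof of Theorem \ref{th: representation of ROU process}''. Your write-up simply makes that last reference explicit, correctly identifying that the only substantive adaptation is replacing the H\"older exponent $\lambda<\tfrac12$ of $W$ by $\lambda<H$ for $B^H$ in the left-continuity argument.
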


\begin{proof}
    Let $\omega \in \Omega$ such that $B^H(\omega, t)$ is locally H\"older continuous in $t$ be fixed (for notational simplicity, we will omit it in brackets). As it was noted above, for such $\omega$ all $Y^H_\varepsilon$ are well-defined and strictly positive. Moreover, by the comparison theorem (see e.g. \cite[Lemma 1]{MYuT2018} or \cite[Lemma A.1]{DNMYT2020}),
    for all $t\ge 0$ and $\varepsilon_1 > \varepsilon_2$ 
    \[
        Y^H_{\varepsilon_1}(t) > Y^H_{\varepsilon_2}(t) > 0. 
    \]
    This implies that for any fixed $t\ge 0$ the limits $\lim_{\varepsilon \downarrow 0} Y_\varepsilon^H(t) = Y^H(t)$ and $\lim_{\varepsilon \downarrow 0} \frac{1}{2}\int_0^t \frac{\varepsilon}{Y^H_\varepsilon(s)}ds = : L^H(t)$ are well-defined, non-negative and finite. Furthermore, by comparison theorem, each $Y^H_\varepsilon$ exceeds the fractional Ornstein-Uhlenbeck of the form
    \[
        U^H(t) = Y(0) - \frac{b}{2}\int_0^t U^H(s)ds + \frac{\sigma}{2} B^H(t), \quad t\ge 0,
    \]
    and hence there exists an interval $[0,t_0)$ such that for all $t\in[0,t_0)$ and $\varepsilon\ge 0$ $Y^H_\varepsilon(t) \ge \frac{Y(0)}{2}$. Thus for any $t\in [0,t_0)$
    \[
        L^H(t) = \lim_{\varepsilon \downarrow 0} \frac{1}{2}\int_0^t \frac{\varepsilon}{Y^H_\varepsilon(s)}ds \le \lim_{\varepsilon \downarrow 0}\frac{t_0\varepsilon}{Y(0)} = 0,
    \]
    i.e. $L^H(t) = 0$ for all $t\in[0,t_0)$.
    
    The remaining part of the proof is identical to the one of Theorem \ref{th: representation of ROU process}.
\end{proof}

\begin{remark}
    Theorem \ref{th: representation of FROU process} and the preceding remark highlight that the FCIR process \eqref{eq: FCIR true form} is not continuous at zero w.r.t. the mean-reversion parameter $a$.
\end{remark}

\section{Simulations}\label{sec: simulations}

Let us illustrate the results with simulations. On Fig. \ref{Fig1}, the black paths depict simulated trajectories of the square root $\{Y^H_\varepsilon(t),~t\ge 0\}$ of the FCIR process given by an equation of the form
\[
    Y^H_\varepsilon(t) = Y(0) + \frac{1}{2}\int_0^t \frac{\varepsilon}{Y^H_{\varepsilon}(s)}ds - \frac{b}{2}\int_0^t Y^H_{\varepsilon}(s)ds + \frac{\sigma}{2}B^H(t)
\]
with $Y(0) = 0.25$, $b=1$, $\sigma = 1$, $\varepsilon = 0.0001$ and different Hurst indices $H$; the red lines are the corresponding integrals $\frac{1}{2}\int_0^t \frac{\varepsilon}{Y^H_\varepsilon(s)}ds$. In order to simulate $Y^H_\varepsilon$, the backward Euler approximation technique from \cite{Kubilius2020} was used, see also \cite{HHKW2020, ZhYu2020}. 

\begin{figure}[h!]
  \centering
\begin{minipage}[b]{0.48\textwidth} \centering
    \includegraphics[width=\textwidth]{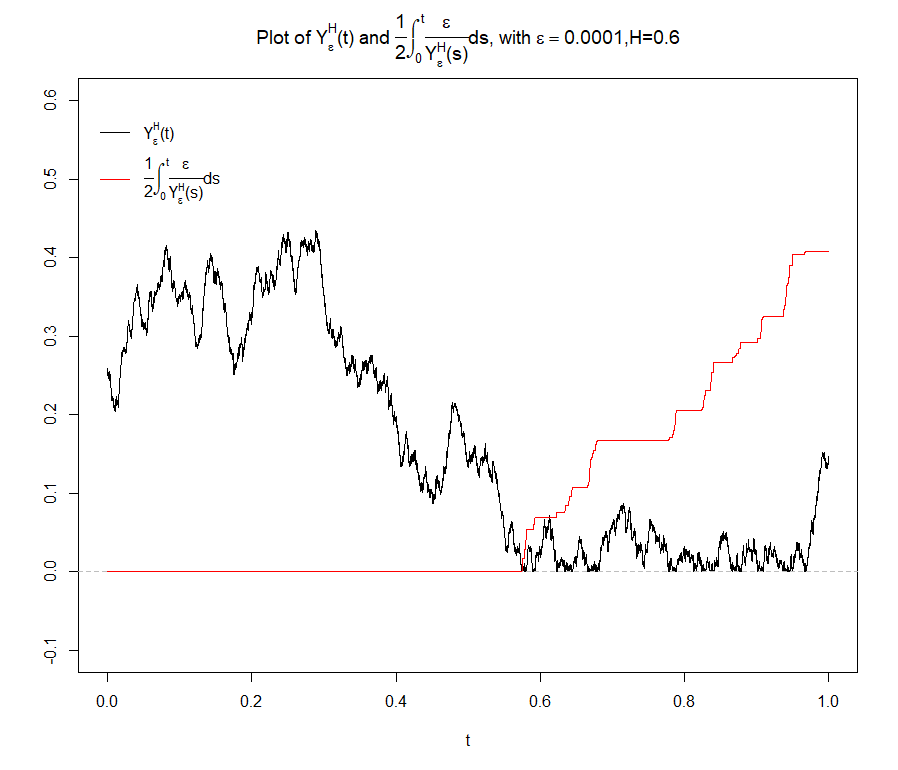}
(a) $H=0.6$
\end{minipage}
\begin{minipage}[b]{0.48\textwidth} \centering
\includegraphics[width=\textwidth]{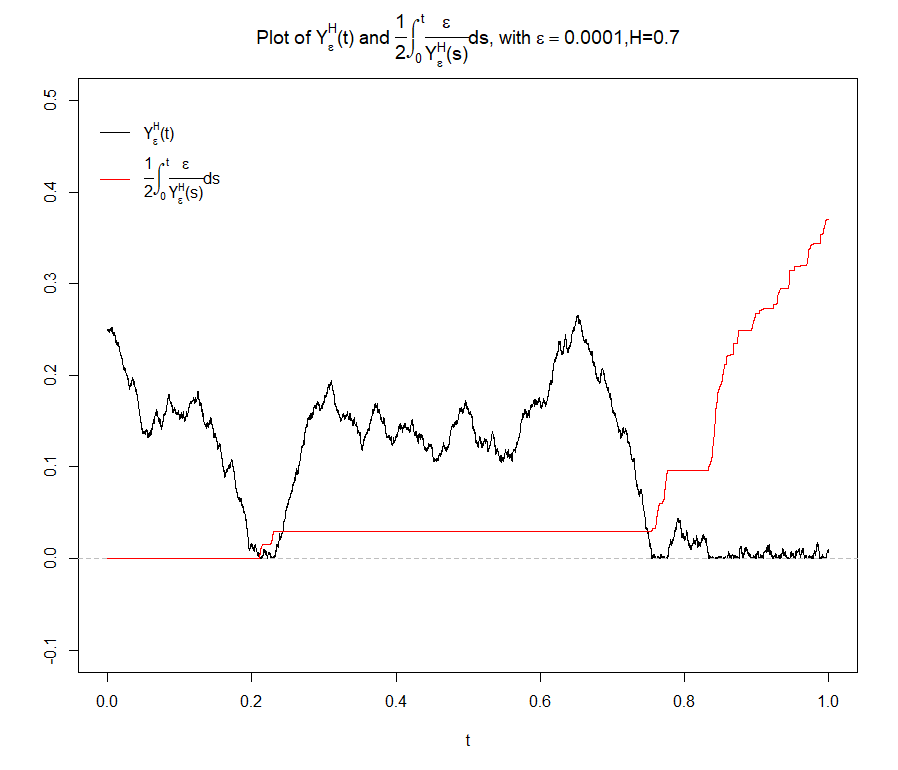}
(b) $H=0.7$
\end{minipage}
\begin{minipage}[b]{0.48\textwidth} \centering
\includegraphics[width=\textwidth]{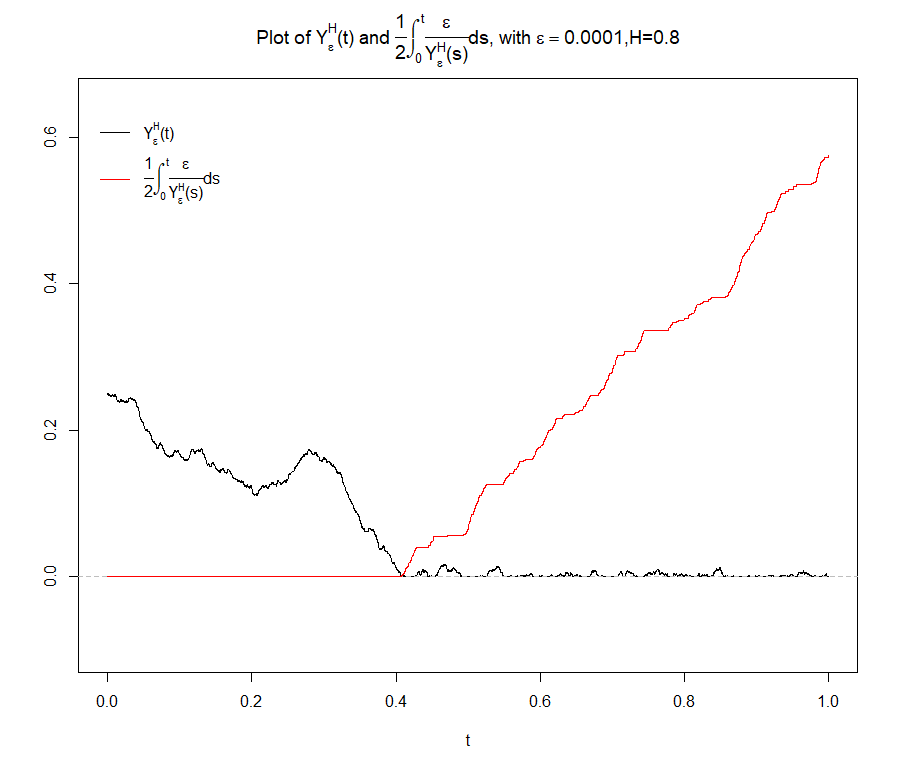}
(c) $H=0.8$
\end{minipage}
\begin{minipage}[b]{0.48\textwidth} \centering
\includegraphics[width=\textwidth]{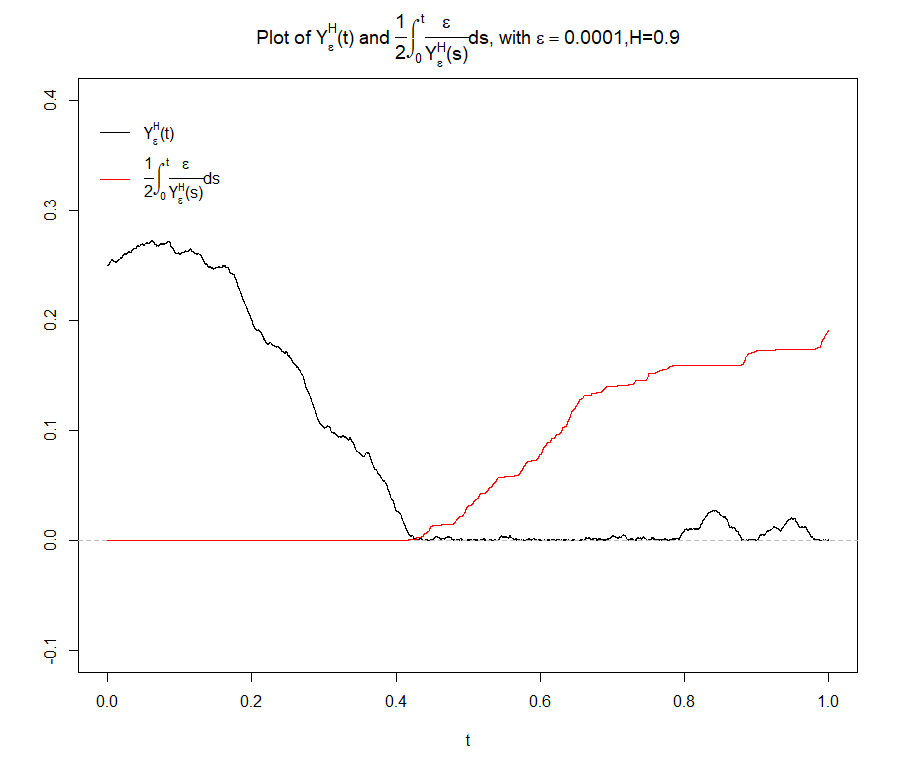}
(d) $H=0.9$
\end{minipage}
\caption{ Sample paths of $Y^H_{\varepsilon}(t)$ (black) and $\frac{1}{2}\int_0^t \frac{\varepsilon}{Y^H_\varepsilon(s)}ds$ (red) for $\varepsilon = 0.0001$ and different Hurst indices $H$.}\label{Fig1}
\end{figure}

Theorem \ref{th: representation of FROU process} states that the red line approximates the reflection function $L^H$ of the RFOU process and it can be clearly seen that the plot is well agreed with the theory: the integral $\frac{1}{2}\int_0^t \frac{\varepsilon}{Y^H_\varepsilon(s)}ds$ shows notable growth only when the corresponding path of $Y^H_\varepsilon$ is very close to zero.

Fig. \ref{Fig2} illustrates the uniform convergence of paths of $Y^H_\varepsilon$ to the path of RFOU process as $\varepsilon \downarrow 0$. On the picture, $H=0.6$, $Y(0) = 0.25$, $b=1$, $\sigma = 1$ and the path of the FROU process $Y^H$ was simulated using the Euler-type method:
\begin{gather*}
    Y^H(0) = Y(0),
    \\
    Y^H(t_{n+1}) = \max\left\{0,~Y^H(t_n) - \frac{b}{2}Y^H(t_n)(t_{n+1} - t_n) + \frac{\sigma}{2} \left(B^H(t_{n+1}) - B^H(t_n)\right)\right\}.
\end{gather*}

When $\varepsilon = 0.0001$, the path of $Y^H_\varepsilon$ (purple) is so close to the corresponding path of the ROU process (bold black) that they are not distinguishable on the plot.

\begin{figure}[h!]
  \centering
  \includegraphics[width=\textwidth]{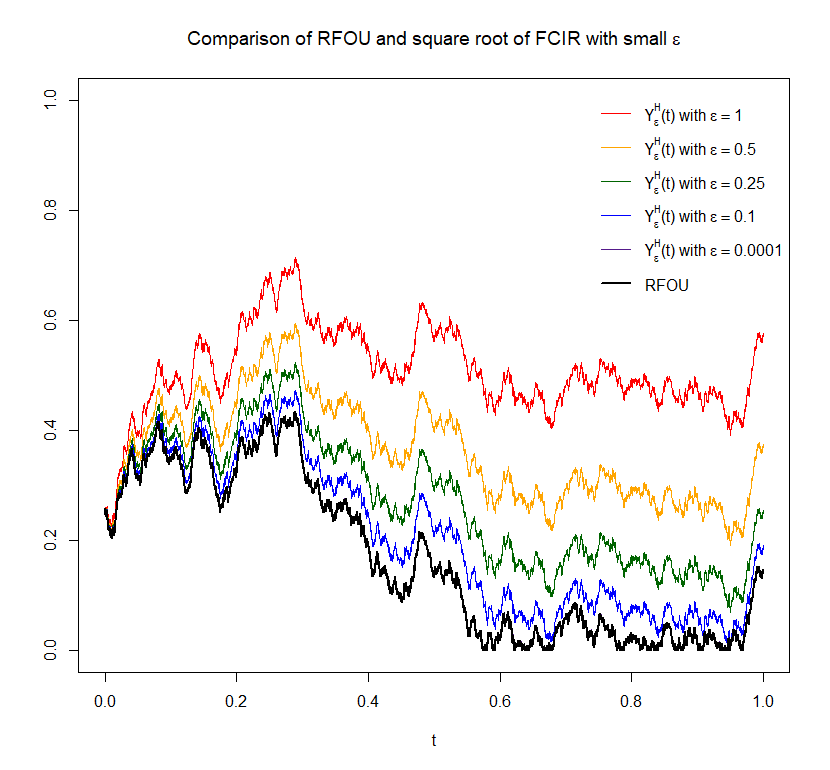}
  \caption{Comparison of the $Y^H_\varepsilon$ with $\varepsilon = 1$ (red), $\varepsilon = 0.5$ (orange), $\varepsilon = 0.25$ (green) $\varepsilon = 0.1$ (blue), $\varepsilon = 0.0001$ (purple) and the RFOU process (bold black). Note that the purple path ($\varepsilon = 0.0001$) is not visible on the plot since it almost completely coincides with the bold black trajectory of the RFOU process.}\label{Fig2}
\end{figure}

\section*{Acknowledgements}

The present research is carried out within the frame and support of the ToppForsk project nr. 274410 of the Research Council of Norway with title STORM: Stochastics for Time-Space Risk Models. The first author is supported by the National Research Fund of Ukraine under grant 2020.02/0026.

\bibliographystyle{tfs}
\bibliography{interacttfssample}

\end{document}